\documentclass[11pt]{article} 
\usepackage{amssymb}
\usepackage{amsmath}
\usepackage{amsthm}
%

%
%

\newtheorem{thm}{Theorem}
\newtheorem{prop}{Proposition}

\newtheorem{cor}{Corollary}

\newtheorem{lem}{Lemma}



\setlength{\voffset}{-2cm}
\setlength{\oddsidemargin}{0.6cm}
\setlength{\evensidemargin}{0cm}
\setlength{\textwidth}{15.4cm}
\setlength{\textheight}{23cm}






\newcommand{\la}{\lambda} 






\newcommand{\Bcal}{\mathcal{B}}

\newcommand{\Lcal}{\mathcal{L}}

\newcommand{\Cbb}{\mathbb{C}}

\newcommand{\Rbb}{\mathbb{R}}

\newcommand{\Zbb}{\mathbb{Z}}


\newcommand{\lp}{\left(}
\newcommand{\rp}{\right)}
\newcommand{\lc}{\left\{}

\newcommand{\lb}{\left[}
\newcommand{\rb}{\right]}


\begin{document}

\renewcommand{\thefootnote}{\fnsymbol{footnote}}
\begin{flushright} OU-HET 912 \end{flushright} 

\begin{center}
{\LARGE {\bf Asymptotic formulae of two divergent bilateral basic hypergeometric series}}  
\vskip1.5cm
{\large 
{Hironori MORI\footnote{Department of Physics, Graduate School of Science, Osaka University, Toyonaka, Osaka 560-0043, Japan, hiromori@het.phys.sci.osaka-u.ac.jp}
and
Takeshi MORITA\footnote{Center for Japanese Language and Culture, Osaka University, Minoh, Osaka 562-0022, Japan, t-morita@cr.math.sci.osaka-u.ac.jp} 
}}
\end{center}

\vskip0.5cm
\begin{abstract} 
We provide new formulae for the degenerations of the bilateral basic hypergeometric function ${}_1\psi_1 ( a; b; q, z )$ with using the $q$-Borel-Laplace transformation. These are thought of as the first step to construct connection formulae of $q$-difference equation for ${}_1\psi_1 ( a; b; q, z )$. Moreover, we show that our formulae have the $q \to 1 - 0$ limit.
\end{abstract}

\renewcommand{\thefootnote}{\arabic{footnote}}

\section{Introduction}
In this paper, we give two asymptotic formulae for the bilateral basic hypergeometric series,
\[{}_1\psi_1(0;b;q,x):=\sum_{n\in\mathbb{Z}}\frac{1}{(b;q)_n}x^n\]
and
\[{}_1\psi_0(a;-;q,x):=\sum_{n\in\mathbb{Z}}(a;q)_n\left\{(-1)^nq^{\frac{n(n-1)}{2}}\right\}^{-1}x^n\]
from the viewpoint of the connection problems on $q$-difference equations. Here, $( a; q )_n$ is the $q$-shifted factorial defined by
\begin{align*}
( a; q )_n := \lc
	\begin{aligned}
	& 1, && n = 0, \\ 
	& ( 1 - a ) ( 1 - a q ) \dots ( 1 - a q^{n - 1} ), && n \ge 1, \\ 
	& \lb ( 1 - a q^{-1} ) ( 1 - a q^{-2} ) \dots ( 1 - a q^n ) \rb^{-1}, && n \le - 1. 
	\end{aligned}
\right.
\end{align*}
Notice that the $q$-shifted factorial is the $q$-analogue of the shifted factorial
\begin{align*} 
( \alpha )_n
=
\alpha \{ \alpha + 1 \} \cdots \{ \alpha + ( n - 1 ) \}.
\end{align*}
Moreover, $( a; q )_\infty := \lim_{n \to \infty} ( a; q )_n$ and we use the shorthand notation
\begin{align*}
( a_1, a_2, \dots, a_m; q )_\infty := ( a_1; q )_\infty ( a_2; q )_\infty \dots ( a_m; q )_\infty.
\end{align*}

Connection problems on $q$-difference equations are originally studied by G.~D.~Birkhoff \cite{Birkhoff}. At first, we review the connection problems on second order $q$-difference equations of the form
\begin{equation}\label{laplace}
(a_0+b_0x)u(q^2x)+(a_1+b_1x)u(qx)+(a_2+b_2x)u(x)=0,
\end{equation}
where $a_0a_2b_0b_2\not=0$. Let $u_1(x)$, $u_2(x)$ be independent solutions of equation \eqref{laplace} around the origin and let $v_1(x)$, $v_2(x)$ be those around infinity. We take suitable analytic continuation of $u_1(x)$ and $u_2(x)$ \cite{GR}. Then we obtain the connection formulae in the following matrix form:
\[\begin{pmatrix}
u_1(x)\\
u_2(x)
\end{pmatrix}
=\begin{pmatrix}
C_{11}(x)&C_{12}(x)\\
C_{21}(x)&C_{22}(x)
\end{pmatrix}
\begin{pmatrix}
v_1(x)\\
v_2(x)
\end{pmatrix}.
\]
Here, functions $C_{jk}(x)$, $(j,k=1,2)$ are $q$-periodic and unique valued, namely, elliptic functions. 
G.~N.~Watson gave the first example of the connection formula. He showed a connection formula for the (unilateral) basic hypergeometric series
\begin{equation}\label{2p1}
{}_2\varphi_1(a,b;c;q,x):=\sum_{n\ge 0}\frac{(a;q)_n(b;q)_n}{(c;q)_n(q;q)_n}x^n
\end{equation}
The series \eqref{2p1} satisfies the second order $q$-difference equation
\begin{equation}\label{heine}
(c-abqx)u(q^2x)-\{c+q-(a+b)qx\}u(qx)+q(1-x)u(x)=0
\end{equation}
around the origin. Equation \eqref{heine} also has solutions around infinity as follows:
\[v_1(x)=\frac{(ax,q/ax;q)_\infty}{(x,q/x;q)_\infty}
{}_2\varphi_1\left(a,\frac{aq}{c};\frac{aq}{b};q,\frac{cq}{abx}\right),\quad 
v_2(x)=\frac{(bx,q/bx;q)_\infty}{(x,q/x;q)_\infty}
{}_2\varphi_1\left(b,\frac{bq}{c};\frac{bq}{a};q,\frac{cq}{abx}\right).\]
The connection formula by Watson \cite{W} is 
\begin{align*}u_1(x)=&\frac{(b,c/a;q)_\infty}{(c,b/a;q)_\infty}\frac{(-ax,-q/ax;q)_\infty}{(-x,-q/x;q)_\infty}
\frac{(x,q/x;q)_\infty}{(ax,q/ax;q)_\infty}v_1(x)\\
&+\frac{(a,c/b;q)_\infty}{(c,a/b;q)_\infty}\frac{(-bx,-q/bx;q)_\infty}{(-x,-q/x;q)_\infty}
\frac{(x,q/x;q)_\infty}{(bx,q/bx;q)_\infty}v_2(x).
\end{align*}
We remark that each connection coefficient is given by the elliptic function. 

In equation \eqref{laplace}, if we assume that $a_0a_2b_0b_2=0$, some power series which appear in formal solutions may be divergent. Therefore, we should take a suitable resummation of a divergent series. J.-P.~Ramis and C.~Zhang introduced the $q$-Borel-Laplace resummation method to study the connection problems. The $q$-Borel-Laplace transformation is given as follows:

\begin{enumerate}
\item We assume that  $f(x)=\sum_{n\ge 0}a_nx^n$ is a formal power series. The $q$-Borel transformation of the first kind $\mathcal{B}_q^+$ is given by
\begin{align}
\left(\mathcal{B}_q^+f\right)(\xi ):=\sum_{n\ge 0}a_nq^{\frac{n(n-1)}{2}}\xi^n.
\label{qBorel}
\end{align}
We denote $ \varphi_f (\xi )=\left(\mathcal{B}_q^+f\right)(\xi )$. 
If  $f(x)$ is a convergent series, then $\varphi_f(\xi )$ is an entire function.

\item We fix $\lambda \in \mathbb{C}^*\setminus q^{\mathbb{Z}}$. For any entire function $\varphi (\xi )$, the $q$-Laplace transformation of the first kind $\mathcal{L}_{q,\lambda}^+$ \cite{Z0} is given by
\begin{align}
\left(\mathcal{L}_{q, \lambda}^+\varphi\right)(x):=
\frac{1}{1-q}\int_0^{\lambda\infty}\frac{\varphi (\xi )}{\theta_q\left(\frac{\xi}{x}\right)}\frac{d_q\xi}{\xi}=\sum_{n\in\mathbb{Z}}\frac{\varphi (\lambda q^n)}{\theta_q\left(\frac{\lambda q^n}{x}\right)},
\label{qLaplace}
\end{align}
where 
\begin{equation*}
\int_0^{\lambda\infty} f(t)d_qt:=(1-q)\lambda \sum_{n\in\mathbb{Z}}f(\lambda q^n)q^n
\end{equation*}
is Jackson's $q$-integral on $(0, \lambda \infty)$ \cite{GR}.  

\end{enumerate}
Thanks to these resummation methods, Zhang gave a connection formula for a divergent series ${}_2\varphi_0(a,b;-;q,x)$ \cite{Z0}. Morita also gave connection formulae for a divergent series ${}_2\varphi_0(0,0;-;q,x)$ \cite{M2} and some unilateral divergent series \cite{M4} by these transformations. 

But when we consider the connection formulae for the \textit{bilateral} series, the connection problems are not so clear. Though L.~J.~Slater  gave a relation between the bilateral series ${}_r\psi_r$ in \cite{Slater}, other relations are not known well. The main aim of this paper is to give (connection) formulae between divergent bilateral series and convergent unilateral series by $q$-Borel-Laplace transformations in Section \ref{degAB}. In the last section, we also give the classical limit $q\to 1-0$ of our new formulae.

We are closing this section with commenting on relationship with physics. We proved summation formulae of ${}_1\psi_1$ in \cite{Mori:2016eaz} which are politic generalizations of the ones found from the study of physics called Abelian mirror symmetry in three-dimensional supersymmetric gauge theories on the unorientable manifold $\Rbb\mathbf{P}^2 \times S^1$ \cite{TMM, MMT}. We hope our new formulae here could open up the class of connection formulae and provide prominent insight into physics.

\section{Notation}\label{Notation}
In this section, we declare basic notation. In the following, we assume that $0 < |q| < 1$. The bilateral basic hypergeometric series with the base $q$ is defined by
\begin{align} \label{bbh} 
{}_r\psi_s ( a_1, \dots, a_r; b_1, \dots, b_s; q, z ) 
:=
\sum_{n \in \mathbb{Z}}
\frac{( a_1, \dots, a_r; q )_n}{( b_1, \dots, b_s; q )_n}
\left\{ ( - 1 )^n q^{\frac{n ( n - 1 )}{2}} \right\}^{s - r} z^n.
\end{align}
This series diverges for $z \not = 0$ if $s < r$ and converges for $|b_1 \dots b_s/a_1 \dots a_r| < |z| < 1$ if $r = s$ (refer to \cite{GR} for more details).
Further, the series \eqref{bbh} is the $q$-analogue of the bilateral hypergeometric function
\begin{align*}
{}_rH_s ( \alpha_1, \dots, \alpha_r; \beta_1, \dots, \beta_s; z )
:=
\sum_{n \in \mathbb{Z}}
\frac{( \alpha_1, \dots, \alpha_r )_n}{( \beta_1, \dots, \beta_s )_n}
z^n,
\end{align*}
which is the bilateral extension of the generalized hypergeometric function
\begin{align*}
{}_{r}F_{s} ( \alpha_1, \cdots, \alpha_r; \beta_1, \cdots, \beta_s; z )
:=
\sum_{n \ge 0}
\frac{( \alpha_1, \cdots, \alpha_r )_{n}}{( \beta_1, \cdots, \beta_s )_{n}}
\frac{z^{n}}{n!}.
\end{align*}
Provided $\Re ( \beta_1 + \dots + \beta_r - \alpha_1 - \dots - \alpha_r ) > 1$, D'Alembert's ratio test could verify that ${}_rH_r$ converges only for $|z| = 1$ \cite{Slater}. For later use, we would like to write down Ramanujan's summation formula given by S.~Ramanujan \cite{Ramanujan},
\begin{align}
{}_1\psi_1 ( a; b; q, z )
=
\sum_{n \in \mathbb{Z}}
\frac{( a; q )_n}{( b; q )_n}
z^n
=
\frac{( q, b/a, a z, q/a z; q )_\infty}{( b, q/a, z, b/a z; q )_\infty}
\label{rsum11}
\end{align}
with $|b/a| < |z| < |1|$. Ramanujan's summation formula is considered as the bilateral extension of the $q$-binomial theorem \cite{GR} 
\begin{align} \label{qbinomial} 
\sum_{n \ge 0}
\frac{( a; q )_n}{( q; q )_n}
z^n
=
\frac{( a z; q )_\infty}{( z; q )_\infty}
\end{align}
for $|z| < 1$. The $q$-binomial theorem was derived by Cauchy \cite{C1}, Heine \cite{H1}, and many mathematicians.

One of the $q$-exponential functions is defined as
\begin{align} 
E_{q} ( z )
&:=
{}_{0}\varphi_{0} ( -; -; q, - z)
=
\sum_{n \geq 0}
{1 \over ( q; q )_{n}}
( - 1 )^{n} q^{n ( n - 1 ) \over 2}
( - z )^{n},
\end{align}
which can be rewritten by the infinite product expression
\begin{align} %
E_{q} ( z ) = ( - z; q )_{\infty}
\label{eq2}
\end{align}
with $|z| < 1$. We note that the limit $q \to 1 - 0$ of this $q$-exponential is actually the standard exponential
\begin{align}
\lim_{q \to 1- 0}
E_{q} ( z ( 1 - q ) )
=
e^z.
\end{align}

The $q$-gamma function $\Gamma_q ( z )$ is given by
\begin{align} 
\Gamma_q ( z )
:=
\frac{( q; q )_\infty}{( q^z; q )_\infty}
( 1 - q )^{1 - z}.
\end{align}
The $q \to 1 - 0$ limit of $\Gamma_q ( z )$ reproduces the gamma function \cite{GR}
\begin{align} 
\lim_{q \to 1 - 0}
\Gamma_q ( z ) = \Gamma ( z ).
\label{limgamma}
\end{align}
The theta function of Jacobi with the base $q$ which we will use is given by
\begin{align} \label{thetaJ} 
\theta_q ( z )
:=
\sum_{n \in \mathbb{Z}}
q^{\frac{n ( n - 1 )}{2}}
z^n,
\qquad \forall z \in \mathbb{C}^*.
\end{align}
With this definition, Jacobi's triple product identity is expressed by
\begin{align} 
\theta_q ( z )
=
( q, - z, - q/z; q )_\infty.
\label{jtpi}
\end{align}
The theta function has the inversion formula
\begin{align} \label{inv} 
\theta_q ( z )
=
\theta_q \left( q/z \right),
\end{align}
and satisfies the $q$-difference equation
\begin{align} 
\theta_q ( z q^k )
=
z^{- k} q^{- \frac{k ( k - 1 )}{2}}
\theta_q ( z ).
\label{thetaperi}
\end{align}
In our study, the following proposition about the theta function \cite{Z1, Mbi} is useful to consider the $q \to 1 - 0$ limit of our formulae in Section \ref{Limit}.
\begin{prop} \label{limthetaq} 
For any $z \in \mathbb{C}^* ( - \pi < \arg z < \pi )$, we have
\begin{align} %
\lim_{q \to 1 - 0}
\frac{\theta_q ( q^\beta z )}{\theta_q ( q^\alpha z )}
=
z^{\alpha - \beta}.
\label{p1}
\end{align}
and 
\begin{align} %
\lim_{q\to 1-0}
\frac{\theta_q \left( \frac{q^\alpha z}{( 1 - q )} \right)}{\theta_q \left( \frac{q^\beta z}{( 1 - q )} \right)} ( 1 - q )^{\beta - \alpha}
=
z^{\beta - \alpha}.
\label{limt2}
\end{align}
\end{prop}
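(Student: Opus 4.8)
\emph{Proof plan.} The plan is to derive both limits from a single uniform asymptotic expansion of $\theta_q$ as $q \to 1-0$, obtained by reading the defining series \eqref{thetaJ} as a Gaussian sum and applying Poisson summation. Setting $q = e^{-\epsilon}$ with $\epsilon \to 0^+$, I would first write
\begin{align*}
\theta_q(z) = \sum_{n\in\mathbb{Z}} \exp\!\left(-\tfrac{\epsilon}{2}n^2 + n\big(\log z + \tfrac{\epsilon}{2}\big)\right),
\end{align*}
and then apply the Jacobi imaginary transformation (Poisson summation for the Gaussian) to obtain
\begin{align*}
\theta_q(z) = \sqrt{\frac{2\pi}{\epsilon}}\sum_{m\in\mathbb{Z}} \exp\!\left(\frac{\big(\log z + \tfrac{\epsilon}{2} + 2\pi i m\big)^2}{2\epsilon}\right).
\end{align*}
The next step is to show that, uniformly for $\arg z$ in any compact subinterval of $(-\pi,\pi)$, the $m=0$ term dominates: a short computation shows that the real part of the exponent of each $m\neq 0$ term falls below that of the $m=0$ term by at least $(2\pi^2 - 2\pi|\arg z|)/\epsilon \to +\infty$. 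This yields
\begin{align*}
\theta_q(z) \sim \sqrt{\frac{2\pi}{\epsilon}}\,\exp\!\left(\frac{(\log z)^2}{2\epsilon} + \frac{\log z}{2}\right), \qquad \epsilon \to 0^+.
\end{align*}
Crucially the gap $(2\pi^2 - 2\pi|\arg z|)/\epsilon$ depends only on $\arg z$ and not on $|z|$, so the estimate persists even when $|z|\to\infty$ provided $\arg z$ stays bounded away from $\pm\pi$; this is exactly where the hypothesis $-\pi<\arg z<\pi$ enters.

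Granting this expansion, I would obtain \eqref{p1} by a direct cancellation. Substituting $\log(q^\gamma z) = \log z - \epsilon\gamma$ into the exponent, the quadratic piece $(\log z)^2/(2\epsilon)$ and the piece $(\log z)/2$ are independent of $\gamma$ and drop out of the ratio $\theta_q(q^\beta z)/\theta_q(q^\alpha z)$, while the remaining contributions are $O(\epsilon)$; what survives is $\exp\big((\alpha-\beta)\log z\big) = z^{\alpha-\beta}$.

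For \eqref{limt2} I would feed the same expansion the $q$-dependent argument $w = z/(1-q)$, writing $L := \log w = \log z - \log(1-q)$. The dominance of the $m=0$ term still holds because $\arg w = \arg z$ remains bounded, even though $|w|\to\infty$. The identical cancellation then gives $\theta_q(q^\alpha w)/\theta_q(q^\beta w) \sim w^{\beta-\alpha} = z^{\beta-\alpha}(1-q)^{\alpha-\beta}$, so multiplying by the prefactor $(1-q)^{\beta-\alpha}$ removes the power of $(1-q)$ and leaves $z^{\beta-\alpha}$, as claimed.

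The main obstacle is the rigorous control of the errors, specifically ensuring the $o(1)$ corrections in the exponent are uniform enough to survive the ratio: since the two theta factors have arguments differing only by the bounded factor $q^{\alpha-\beta}$ (resp. $q^{\beta-\alpha}$), their correction factors match to leading order and their quotient tends to $1$, but this must be verified, and in the regime of \eqref{limt2} it must be verified with $|w|\to\infty$. An elementary alternative that sidesteps Poisson summation is to expand each ratio via Jacobi's triple product \eqref{jtpi} as $\prod_{k\ge0}(1+q^{k+\beta}z)/(1+q^{k+\alpha}z)$ times the reciprocal-argument factors, take logarithms, and identify the resulting difference of sums as a Riemann sum converging to $(\alpha-\beta)\log(1+z)+(\beta-\alpha)\log(1+1/z)=(\alpha-\beta)\log z$; this works cleanly for \eqref{p1}, but for \eqref{limt2} the growth of the argument makes it delicate, so I would favour the Poisson route above.
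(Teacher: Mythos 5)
The paper does not actually prove this proposition: it is quoted with citations to Zhang's paper on Jackson's $q$-Bessel functions and to Morita's earlier asymptotic-formula paper, so there is no in-text argument to compare yours against. Judged on its own, your Poisson-summation route is sound and is essentially the standard way these limits are established in the cited literature. Your Gaussian rewriting of \eqref{thetaJ} with $q=e^{-\epsilon}$ and the Jacobi imaginary transformation are correct, and the key point you isolate --- that the real-part gap between the $m=0$ term and the $m\neq 0$ terms is at least $2\pi\lp\pi-\abs{\arg z}\rp/\epsilon$, depending on $\arg z$ but not on $\abs{z}$ --- is exactly what makes the same expansion serve both \eqref{p1} and \eqref{limt2}, where the argument $z/(1-q)$ blows up in modulus while its phase stays fixed. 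One thing you can say more forcefully: the transformed series is an exact identity, not merely an asymptotic, so the ``error control'' you flag as the main obstacle is essentially automatic --- each theta function equals its $m=0$ term times $1+R$ with $\abs{R}\le\sum_{m\neq 0}\exp\lp-2\pi\abs{m}(\pi\abs{m}-\abs{\arg z})/\epsilon\rp\to 0$ uniformly on compact subsets of $\abs{\arg z}<\pi$, and the quotient of two such factors tends to $1$ with no further matching needed. The explicit cancellation of the $\gamma$-independent pieces $(\log z)^2/2\epsilon$ and $(\log z)/2$ then gives $z^{\alpha-\beta}$, and the bookkeeping of the $(1-q)^{\beta-\alpha}$ prefactor in \eqref{limt2} is correct. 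Your assessment of the alternative triple-product/Riemann-sum argument is also fair: it handles \eqref{p1} via \eqref{limbin} applied to the four relevant infinite products (note the two resulting factors $(1+z)^{\alpha-\beta}(1+1/z)^{\beta-\alpha}$ combine to $z^{\alpha-\beta}$ only after analytically continuing \eqref{limbin} off $\abs{z}<1$, which again uses $\arg z\neq\pm\pi$), but it does not extend painlessly to the scaled argument of \eqref{limt2}, where the Poisson expansion is the right tool.
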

We also use the following limiting formula of the ratio of the $q$-shifted factorial \cite{GR}:
\begin{align}
\lim_{q \to 1 - 0}
\frac{( z q^\alpha; q )_\infty}{( z; q )_\infty}
=
( 1 - z )^{- \alpha}, \qquad |z| < 1.
\label{limbin}
\end{align}

The crucial ingredients for our new formulae are the $q$-Borel-Laplace transformation.
In the rest of the paper, we concentrate on the sequence of the action of the $q$-Borel-Laplace transformation on degenerations of the divergent series ${}_{1}\psi_{1}^{\text{deg}}$,
\begin{align*}
{}_{1}\psi_{1}^{\text{deg}} ( x )
\xrightarrow[]{\Bcal_{q}^{+}}
\psi ( \xi )
\xrightarrow[]{\Lcal_{q, \la}^{+}}
\widetilde{\psi}_{\la} ( x ).
\end{align*}
We remark that $\la$-dependence on $\Lcal_{q, \la}^{+} \circ \Bcal_{q}^{+} f$ vanishes, i.e., $\Lcal_{q, \la}^{+} \circ \Bcal_{q}^{+} f = f$ if $f ( x )$ is a convergent series.

\section{Degenerations of the bilateral basic hypergeometric series}\label{degAB}
In this section, we will provide new convergent series obtained by acting the $q$-Borel-Laplace transformation on degenerations of the bilateral basic hypergeometric series ${}_1\psi_1(a;b;q,x)$ which satisfies the first order $q$-difference equation
\begin{equation}\label{1psi1}
\left(\frac{b}{q}-ax\right)u(qx)+(x-1)u(x)=0.
\end{equation}
We consider two different degenerations of equation \eqref{1psi1}. 
\begin{enumerate}
\item Degeneration A

In the equation \eqref{1psi1}, if we take the limit $a\to 0$, we obtain the following equation:
\begin{equation}\label{degA}
\frac{b}{q}\tilde{u}(x)+(x-1)\tilde{u}(x)=0.
\end{equation}
The bilateral series solution is given by 
\begin{equation}\label{dAbs}\tilde{u}(x)={}_1\psi_1(0;b;q,x).
\end{equation}
We remark that the series is a divergent series around the origin. We can also find a unilateral series solution \textit{around the origin} as follows:
\begin{equation}\label{dAus}
\tilde{v}(x)=\frac{\theta_q(bx)}{\theta_q(qx)} {}_1\varphi_0(0;-;q,x).
\end{equation}

\item Degeneration B

In the equation \eqref{1psi1}, if we put $x\mapsto bx$ and take the limit $b\to \infty$, we obtain another equation as follows:
\begin{equation}\label{degB}
\left(\frac{1}{q}-ax\right)\hat{u}(qx)+x\hat{u}(x)=0.
\end{equation}
The bilateral series solution is
\begin{align}
\hat{u}(x)={}_1\psi_0(a;-;q,x).
\label{dB1}
\end{align}
We remark that the solution $\hat{u}(x)$ contains a divergent series around the origin. We also find the unilateral basic hypergeometric series solution \textrm{around infinity} is 
\begin{equation}\label{dBus}
\hat{v}(x)=\frac{\theta_q(ax)}{\theta_q(x)}{}_1\varphi_0\left(0;-;q, \frac{1}{ax}\right). 
\end{equation}
\end{enumerate}
In the following subsection, we consider asymptotic formulae for these two divergent series.

\subsection{Degeneration A}
We here deal with the divergent series \eqref{dAbs},
\begin{align}
{}_{1}\psi_{1} ( 0; b; q, x )
=
\sum_{n \in \Zbb}
{1 \over ( b; q )_{n}} x^n.
\end{align}
As preparation, let us show the following formula:
\begin{lem} \label{limRam} 
For any $x \in \Cbb^\ast$, we have
\begin{align} %
{}_{0}\psi_{1} ( -; b; q, x )
=
{( q; q )_{\infty} \over ( b; q )_{\infty}}
{\theta_{q} \lp - x \rp \over \theta_{q} \lp - {q x \over b} \rp}
\lp {q x \over b}; q \rp_{\infty}.
\end{align}
\end{lem}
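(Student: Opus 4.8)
The plan is to realize ${}_0\psi_1(-;b;q,x)$ as a confluent limit of ${}_1\psi_1(a;b;q,x)$ and then push Ramanujan's summation formula \eqref{rsum11} through that limit. The starting observation is that, for each fixed $n \in \Zbb$, the $q$-shifted factorial satisfies $\lim_{a \to \infty} ( a; q )_n / a^n = ( - 1 )^n q^{n ( n - 1 ) / 2}$: the leading term of $( a; q )_n$ in powers of $a$ is $( - a )^n q^{n ( n - 1 ) / 2}$, and a direct check confirms this for both $n \ge 0$ and $n \le - 1$. Hence replacing $z$ by $x / a$ converts the summand $\frac{( a; q )_n}{( b; q )_n} ( x / a )^n$ into $\frac{( - 1 )^n q^{n ( n - 1 ) / 2}}{( b; q )_n} x^n$ as $a \to \infty$, which is exactly the summand of ${}_0\psi_1(-;b;q,x)$ read off from \eqref{bbh}. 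So at the level of series I expect $\lim_{a \to \infty} {}_1\psi_1 \lp a; b; q, x / a \rp = {}_0\psi_1(-;b;q,x)$.

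Next I would apply Ramanujan's formula \eqref{rsum11} with $z = x / a$, where the cancellations $a z = x$ and $q / ( a z ) = q / x$ give
\begin{align*}
{}_1\psi_1 \lp a; b; q, \frac{x}{a} \rp
=
\frac{( q, b/a, x, q/x; q )_\infty}{( b, q/a, x/a, b/x; q )_\infty}.
\end{align*}
Letting $a \to \infty$ and using $( c / a; q )_\infty \to ( 0; q )_\infty = 1$ for the three factors $( b/a; q )_\infty$, $( q/a; q )_\infty$, $( x/a; q )_\infty$, I obtain
\begin{align*}
{}_0\psi_1(-;b;q,x)
=
\frac{( q; q )_\infty ( x; q )_\infty ( q/x; q )_\infty}{( b; q )_\infty ( b/x; q )_\infty}.
\end{align*}

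It then remains to rewrite this product in the theta form of the statement. Using Jacobi's triple product identity \eqref{jtpi} I would expand $\theta_q ( - x ) = ( q, x, q/x; q )_\infty$ and $\theta_q ( - q x / b ) = ( q, q x / b, b/x; q )_\infty$; dividing these cancels $( q; q )_\infty$ and the factor $( q x / b; q )_\infty$, so that multiplying back by $( q x / b; q )_\infty$ and by $\frac{( q; q )_\infty}{( b; q )_\infty}$ reproduces precisely $\frac{( q; q )_\infty}{( b; q )_\infty} \frac{\theta_q ( - x )}{\theta_q ( - q x / b )} \lp q x / b; q \rp_\infty$, i.e.\ the claimed right-hand side.

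The main obstacle is analytic rather than algebraic. Ramanujan's formula holds only on the annulus $|b / a| < |z| < 1$, which under $z = x / a$ becomes $|b| < |x| < |a|$, so the computation above is legitimate only for $|x| > |b|$, and one must justify interchanging $\lim_{a \to \infty}$ with the bilateral sum over $n \in \Zbb$. This is a dominated-convergence estimate: the positive-$n$ terms are crushed by $q^{n ( n - 1 ) / 2}$ uniformly in large $a$, while the negative-$n$ tail is controlled by a geometric factor of order $( |b| / |x| )^{|n|} < 1$. Once the identity is established on $|x| > |b|$, its validity for every $x \in \Cbb^\ast$ follows by analytic continuation, since the right-hand side is manifestly a meromorphic function of $x$ on $\Cbb^\ast$.
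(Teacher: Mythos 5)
Your proposal follows essentially the same route as the paper's own proof: realize ${}_0\psi_1(-;b;q,x)$ as the confluent limit $a\to\infty$ of ${}_1\psi_1(a;b;q,x/a)$, apply Ramanujan's summation formula \eqref{rsum11}, and repackage the resulting infinite products via Jacobi's triple product identity \eqref{jtpi}. Your closing paragraph on the domain $|x|>|b|$, the interchange of limit and bilateral sum, and the extension by analytic continuation is a welcome addition of rigor that the paper itself passes over in silence, but it does not change the underlying argument.
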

\begin{proof} 
Firstly, we scale $x \to x/a$ in the bilateral basic hypergeometric function,
\begin{align*} %
{}_{1}\psi_{1} ( a; b; q, x/a )
&=
\sum_{n \in \Zbb}
{( a; q )_{n} \over ( b; q )_{n}} {x^n \over a^n} \notag \\ 
&=
\sum_{n \in \Zbb}
{1 \over ( b; q )_{n}}
{( 1 - a ) ( 1 - a q ) \cdots ( 1 - a q^n ) \over a^n}
x^n \notag \\ 
&=
\sum_{n \in \Zbb}
{1 \over ( b; q )_{n}}
\lp {1 \over a} - 1 \rp \lp {1 \over a} - q \rp \cdots \lp {1 \over a} - q^n \rp
x^n. 
\end{align*}
Then, taking the limit $a \to \infty$ gives
\begin{align*} %
\lim_{a \to \infty}
{}_{1}\psi_{1} ( a; b; q, x/a )
=
\sum_{n \in \Zbb}
{1 \over ( b; q )_{n}}
( - 1 )^n q^{n ( n - 1 ) \over 2}
x^n
=
{}_{0}\psi_{1} ( -; b; q, x ).
\end{align*}
On the other hand, we repeat the same process into Ramanujan's summation formula \eqref{rsum11},
\begin{align*} %
\lim_{a \to \infty}
{}_{1}\psi_{1} ( a; b; q, x/a )
=
\lim_{a \to \infty}
{( q, {b \over a}, x, {q \over x}; q )_{\infty}
\over
( b, {q \over a}, {x \over a}, {b \over x}; q )_{\infty}}
=
{( q, x, {q \over x}; q )_{\infty}
\over
( b, {b \over x}; q )_{\infty}}.
\end{align*}
Therefore,
\begin{align*} %
{}_{0}\psi_{1} ( -; b; q, x )
&=
{( q, x, {q \over x}; q )_{\infty}
\over
( b, {b \over x}; q )_{\infty}} \notag \\ 
&=
{( q; q )_{\infty} \over ( b; q )_{\infty}}
{( x, {q \over x}, q; q )_{\infty} \over ( {b \over x}, {q x \over b}, q; q )_{\infty}}
\lp {q x \over b}; q \rp_{\infty} \notag \\ 
&=
{( q; q )_{\infty} \over ( b; q )_{\infty}}
{\theta_{q} \lp - x \rp \over \theta_{q} \lp - {q x \over b} \rp}
\lp {q x \over b}; q \rp_{\infty}, 
\end{align*}
which we actually would like to show.
\end{proof}
The main purpose is to apply the $q$-Borel-Laplace transformation to the divergent series ${}_{1}\psi_{1} ( 0; b; q, x )$. As a result, we can find the following convergent series for the degeneration $a \to 0$ of ${}_{1}\psi_{1}$:
\begin{thm} \label{qLBA} 
For any $x \in \Cbb^\ast \setminus [ - \la; q ]$, we have
\begin{align*} %
\widetilde{\psi}_\la^{\mathrm{A}} ( x )
=
{( q; q )_{\infty} \over ( b; q )_{\infty}}
{\theta_{q} \lp \la \rp \theta_{q} \lp {\la q \over b x} \rp
\over
\theta_{q} \lp {q \la \over b} \rp \theta_{q} \lp {\la \over x} \rp}
{}_{1}\varphi_{0} ( 0; -; q, x),
\end{align*}
where $\widetilde{\psi}_\la^{\text{A}} ( x ) := \lp \Lcal_{q, \la}^{+} \circ \Bcal_{q}^{+} {}_{1}\psi_{1} \rp ( 0; b; q, x )$.
\end{thm}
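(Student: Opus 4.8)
The plan is to carry out the two-step transformation $\Lcal_{q,\la}^{+} \circ \Bcal_{q}^{+}$ explicitly, using Lemma \ref{limRam} twice and collapsing the resulting theta quotients with the quasi-periodicity \eqref{thetaperi}. First I would compute the intermediate function $\psi(\xi) = \lp \Bcal_{q}^{+} {}_{1}\psi_{1} \rp(0;b;q,\xi)$. Since $\Bcal_{q}^{+}$ multiplies the $n$-th coefficient $1/(b;q)_{n}$ by $q^{n(n-1)/2}$, comparison with the definition \eqref{bbh} of ${}_{0}\psi_{1}$ shows termwise that $\psi(\xi) = {}_{0}\psi_{1}(-;b;q,-\xi)$. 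Applying Lemma \ref{limRam} with $x \mapsto -\xi$ then gives the closed form $\psi(\xi) = \frac{(q;q)_{\infty}}{(b;q)_{\infty}}\frac{\theta_{q}(\xi)}{\theta_{q}(q\xi/b)}\lp -q\xi/b;q\rp_{\infty}$, a meromorphic function on $\Cbb^{\ast}$ whose poles sit on the $q$-spiral $-bq^{\Zbb}$; the hypotheses $\la \in \Cbb^{\ast}\setminus q^{\Zbb}$ and $x \notin [-\la;q]$ keep every sampled point $\la q^{n}$ and every denominator $\theta_{q}(\la q^{n}/x)$ away from zeros, so the transformation is well defined.

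Next I would feed this into the series form of the $q$-Laplace transform \eqref{qLaplace}, giving $\widetilde{\psi}_{\la}^{\mathrm{A}}(x) = \sum_{n\in\Zbb} \psi(\la q^{n})/\theta_{q}(\la q^{n}/x)$. The key simplification is to strip the $n$-dependence off the three theta functions $\theta_{q}(\la q^{n})$, $\theta_{q}((q\la/b)q^{n})$, $\theta_{q}((\la/x)q^{n})$ via \eqref{thetaperi}: the Gaussian factors $q^{-n(n-1)/2}$ recombine into a single $q^{n(n-1)/2}$, the monomials collapse to $(q\la/(bx))^{n}$, and the $q$-shifted factorial obeys $\lp -\tfrac{q\la}{b}q^{n};q\rp_{\infty} = \lp -\tfrac{q\la}{b};q\rp_{\infty}/\lp -\tfrac{q\la}{b};q\rp_{n}$. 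After pulling the $n$-independent theta quotient $\theta_{q}(\la)/\lp\theta_{q}(q\la/b)\theta_{q}(\la/x)\rp$ out of the sum, what remains is $\sum_{n\in\Zbb} \frac{(-1)^{n}q^{n(n-1)/2}}{(-q\la/b;q)_{n}}\lp -\tfrac{q\la}{bx}\rp^{n} = {}_{0}\psi_{1}\lp -;-\tfrac{q\la}{b};q,-\tfrac{q\la}{bx}\rp$.

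Finally I would invoke Lemma \ref{limRam} a second time, now with $b \mapsto -q\la/b$ and $x \mapsto -q\la/(bx)$; a short computation gives $qz/\beta = q/x$, so the factor $\lp q/x;q\rp_{\infty}/\theta_{q}(-q/x)$ appears and the two factors $\lp -q\la/b;q\rp_{\infty}$ from the two applications of the lemma cancel. Collecting everything leaves $\frac{(q;q)_{\infty}}{(b;q)_{\infty}}\frac{\theta_{q}(\la)\theta_{q}(q\la/(bx))}{\theta_{q}(q\la/b)\theta_{q}(\la/x)}$ times $\frac{(q;q)_{\infty}}{\theta_{q}(-q/x)}\lp q/x;q\rp_{\infty}$. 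The last step is to recognize, via Jacobi's triple product \eqref{jtpi}, that $\theta_{q}(-q/x) = (q;q)_{\infty}\lp q/x;q\rp_{\infty}(x;q)_{\infty}$, so the trailing factor reduces to $1/(x;q)_{\infty} = {}_{1}\varphi_{0}(0;-;q,x)$ by the $q$-binomial theorem \eqref{qbinomial}. This yields the claimed formula.

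The main obstacle I expect is the bookkeeping in the middle paragraph — correctly tracking the powers of $\la$, of $q$, and the signs through \eqref{thetaperi} so that the summand assembles into a genuine ${}_{0}\psi_{1}$ — rather than any conceptual difficulty. The only point requiring care is the interpretation of $\Bcal_{q}^{+}$ applied termwise to the two-sided series ${}_{1}\psi_{1}(0;b;q,x)$, and the use of its closed form from Lemma \ref{limRam} as the meromorphic object that $\Lcal_{q,\la}^{+}$ samples along the spiral $\la q^{\Zbb}$.
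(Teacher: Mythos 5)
Your argument is correct and reaches the stated formula, but the second half takes a genuinely different route from the paper. Both proofs agree up to the point where $\Bcal_q^+$ produces ${}_0\psi_1(-;b;q,-\xi)$, Lemma \ref{limRam} gives its closed form, and the quasi-periodicity \eqref{thetaperi} strips the $n$-dependence off the three theta functions inside the Laplace sum. From there the paper expands the remaining factor $E_q(q\la q^n/b)$ as its defining power series in $m$, interchanges the two sums, and reindexes $N=n+m$ so that the bilateral Gaussian sum reassembles directly into $\theta_q(\la q/(bx))$ times the unilateral series ${}_1\varphi_0(0;-;q,x)$. You instead keep the infinite product, use $(aq^n;q)_\infty=(a;q)_\infty/(a;q)_n$ to absorb the $n$-dependence into a $q$-shifted factorial, recognize the resulting bilateral sum as a second ${}_0\psi_1$ (with parameters $-q\la/b$ and argument $-q\la/(bx)$), evaluate it by a second application of Lemma \ref{limRam}, and finish with Jacobi's triple product \eqref{jtpi} and the $a=0$ case of the $q$-binomial theorem \eqref{qbinomial}. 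Your route trades the double-sum reindexing for a second use of the degenerate Ramanujan summation; it is arguably cleaner bookkeeping and makes the cancellation of the $(-q\la/b;q)_\infty$ factors transparent, at the cost of invoking the summation formula (and hence its region of validity, $|q\la/(bx)|>|q\la/b|$, i.e.\ $|x|<1$, continued meromorphically) a second time --- a caveat of exactly the same nature as the convergence caveat already implicit in the paper's own final step, so nothing is lost.
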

\begin{proof} 
The $q$-Borel transformation \eqref{qBorel} of ${}_{1}\psi_{1} ( 0; b; q, x )$ provides
\begin{align*} %
\psi_{\text{A}} ( \xi )
&:=
\lp \Bcal_{q}^{+} {}_{1}\psi_{1} \rp ( 0; b; q, x ) \notag \\ 
&=
\sum_{n \in \Zbb}
{1 \over ( b; q )_{n}}
q^{n ( n - 1 ) \over 2}
\xi^n \notag \\ 
&=
{}_{0}\psi_{1} ( -; b; q, - \xi ). 
\end{align*}
Actually, ${}_{0}\psi_{1}$ has the degenerated version of the Ramanujan's summation formula shown in Lemma \ref{limRam}, that is,
\begin{align*} %
\psi_{\text{A}} ( \xi )
&=
{( q; q )_{\infty} \over ( b; q )_{\infty}}
{\theta_{q} \lp \xi \rp \over \theta_{q} \lp {q \xi \over b} \rp}
\lp - {q \xi \over b}; q \rp_{\infty} \notag \\ 
&=
{( q; q )_{\infty} \over ( b; q )_{\infty}}
{\theta_{q} \lp \xi \rp \over \theta_{q} \lp {q \xi \over b} \rp}
E_{q} \lp {q \xi \over b} \rp. 
\end{align*}
Then, we take the $q$-Laplace transformation \eqref{qLaplace} to $\psi_{\text{A}} ( \xi )$,
\begin{align*} %
\widetilde{\psi}_\la^{\text{A}} ( x )
&:=
\lp \Lcal_{q, \la}^{+} \psi_{\text{A}} \rp ( \xi ) \notag \\ 
&=
\sum_{n \in \Zbb}
{{}_{0}\psi_{1} ( -; b; q, - \la q^{n} ) \over \theta_{q} \lp {\la q^{n} \over x} \rp} \notag \\ 
&=
{( q; q )_{\infty} \over ( b; q )_{\infty}}
\sum_{n \in \Zbb}
{\theta_{q} \lp \la q^{n} \rp \over \theta_{q} \lp {q \la q^{n} \over b} \rp}
{1 \over \theta_{q} \lp {\la q^{n} \over x} \rp}
\sum_{m \geq 0}
{( - 1 )^{m} q^{m ( m - 1 ) \over 2} \over ( q; q )_{m}}
\lp - {q \la q^{n} \over b} \rp^{m} \notag \\ 
&=
{( q; q )_{\infty} \over ( b; q )_{\infty}}
\sum_{n \in \Zbb}
{\la^{- n} q^{- {n ( n - 1 ) \over 2}} \theta_{q} \lp \la \rp \over \lp {q \la \over b} \rp^{- n} q^{- {n ( n - 1 ) \over 2}} \theta_{q} \lp {q \la \over b} \rp}
{1 \over \lp {\la \over x} \rp^{- n} q^{- {n ( n - 1 ) \over 2}} \theta_{q} \lp {\la \over x} \rp}
\sum_{m \geq 0}
{( - 1 )^{m} q^{m ( m - 1 ) \over 2} \over ( q; q )_{m}}
\lp - {q \la q^{n} \over b} \rp^{m} \notag \\ 
&=
{( q; q )_{\infty} \over ( b; q )_{\infty}}
{\theta_{q} \lp \la \rp \over \theta_{q} \lp {q \la \over b} \rp \theta_{q} \lp {\la \over x} \rp}
\sum_{n \in \Zbb}
\sum_{m \geq 0}
\lp {\la q \over b x} \rp^{n + m} q^{{( n + m ) ( n + m - 1 ) \over 2}}
{1 \over ( q; q )_{m}}
x^{m} \notag \\ 
&=
{( q; q )_{\infty} \over ( b; q )_{\infty}}
{\theta_{q} \lp \la \rp \over \theta_{q} \lp {q \la \over b} \rp \theta_{q} \lp {\la \over x} \rp}
\sum_{N \in \Zbb}
\lp {\la q \over b x} \rp^{N} q^{{N ( N - 1 ) \over 2}}
\sum_{m \geq 0}
{1 \over ( q; q )_{m}}
x^{m} \notag \\ 
&=
{( q; q )_{\infty} \over ( b; q )_{\infty}}
{\theta_{q} \lp \la \rp \over \theta_{q} \lp {q \la \over b} \rp \theta_{q} \lp {\la \over x} \rp}
\theta_{q} \lp {\la q \over b x} \rp
{}_{1}\varphi_{0} ( 0; -; q, x). 
\end{align*}
The final expression is obliviously a convergent series with $x \in \Cbb^\ast \setminus [ - \la; q ]$ as expected.
\end{proof}

\begin{cor}By Theorem \ref{qLBA} and \eqref{dAus}, we obtain the following relation:
\[\widetilde{\psi}_\la^{\mathrm{A}} ( x )=
{( q; q )_{\infty} \over ( b; q )_{\infty}}\frac{\theta_q(\lambda )}{\theta_q\left(\frac{q\lambda }{b}\right)}
\frac{\theta_q\left(\frac{bx}{\lambda}\right)}{\theta_q\left(\frac{qx}{\lambda}\right)}\frac{\theta_q(qx)}{\theta_q(bx)}\tilde{v}(x)=:C_{\mathrm{A}}(x)\tilde{v}(x).
\]
We remark that the function $C_{\mathrm{A}}(x)$ is an elliptic function, namely, $q$-periodic and unique valued.
\end{cor}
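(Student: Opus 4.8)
The plan is to treat the corollary as a short algebraic manipulation built on two inputs already in hand: the closed form of $\widetilde{\psi}_\la^{\mathrm{A}}(x)$ from Theorem \ref{qLBA} and the explicit unilateral solution \eqref{dAus}. First I would solve \eqref{dAus} for the basic hypergeometric factor, writing ${}_1\varphi_0(0;-;q,x) = \frac{\theta_q(qx)}{\theta_q(bx)}\tilde{v}(x)$, and substitute this into the formula for $\widetilde{\psi}_\la^{\mathrm{A}}(x)$ supplied by Theorem \ref{qLBA}. This immediately displays $\widetilde{\psi}_\la^{\mathrm{A}}(x)$ as a product of $\theta_q$-ratios multiplying $\tilde{v}(x)$, so the remaining work is purely to match the $\theta_q$-factors against those asserted in the statement.

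The single identity that drives the matching is the inversion formula \eqref{inv}, $\theta_q(z)=\theta_q(q/z)$. Applying it with $z=\frac{\la q}{bx}$ gives $\theta_q\lp \frac{\la q}{bx}\rp=\theta_q\lp \frac{bx}{\la}\rp$, and with $z=\frac{\la}{x}$ gives $\theta_q\lp \frac{\la}{x}\rp=\theta_q\lp \frac{qx}{\la}\rp$. These two replacements convert the factor $\theta_q\lp\frac{\la q}{bx}\rp/\theta_q\lp\frac{\la}{x}\rp$ inherited from Theorem \ref{qLBA} into $\theta_q\lp\frac{bx}{\la}\rp/\theta_q\lp\frac{qx}{\la}\rp$, which is exactly the shape in the asserted $C_{\mathrm{A}}(x)$. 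Absorbing the $x$-independent constant $\frac{(q;q)_\infty}{(b;q)_\infty}\frac{\theta_q(\la)}{\theta_q(q\la/b)}$ then produces the displayed relation and defines $C_{\mathrm{A}}(x)$.

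For the elliptic claim I would verify $q$-periodicity directly from the quasi-periodicity \eqref{thetaperi}. Only the $x$-dependent block $P(x):=\frac{\theta_q(bx/\la)}{\theta_q(qx/\la)}\frac{\theta_q(qx)}{\theta_q(bx)}$ is relevant. Using the $k=1$ case $\theta_q(zq)=z^{-1}\theta_q(z)$ of \eqref{thetaperi} on each of the four factors after $x\mapsto qx$ yields the multipliers $\frac{\la}{bx}$, $\frac{qx}{\la}$, $\frac{1}{qx}$, $bx$ for the ratio $P(qx)/P(x)$; these collapse step by step to $1$, giving $P(qx)=P(x)$ and hence $C_{\mathrm{A}}(qx)=C_{\mathrm{A}}(x)$. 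Single-valuedness is automatic, since $C_{\mathrm{A}}$ is a finite product of theta functions and their reciprocals, each single-valued on $\Cbb^\ast$; together with $q$-periodicity this is precisely the statement that $C_{\mathrm{A}}$ descends to an elliptic function on $\Cbb^\ast/q^{\Zbb}$.

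I do not expect a genuine obstacle: the whole corollary is bookkeeping with theta identities. The one point demanding care is the periodicity cancellation, where the four individual multipliers must combine to exactly $1$; a misplaced power of $q$ in applying \eqref{thetaperi} would spoil it, so I would first confirm that the exponent $q^{-k(k-1)/2}$ vanishes at $k=1$ before carrying out the cancellation.
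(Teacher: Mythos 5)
Your proposal is correct and follows exactly the route the paper intends: substitute ${}_1\varphi_0(0;-;q,x)=\frac{\theta_q(qx)}{\theta_q(bx)}\tilde{v}(x)$ from \eqref{dAus} into Theorem \ref{qLBA}, rewrite $\theta_q\lp\frac{\la q}{bx}\rp/\theta_q\lp\frac{\la}{x}\rp$ via the inversion formula \eqref{inv}, and check $q$-periodicity of $C_{\mathrm{A}}$ with \eqref{thetaperi}; your multiplier computation $\frac{\la}{bx}\cdot\frac{qx}{\la}\cdot\frac{1}{qx}\cdot bx=1$ is right. The paper gives no further detail for this corollary, so nothing is missing from your account.
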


\subsection{Degeneration B}
Let us turn to divergent series \eqref{dB1},
\begin{align} %
{}_{1}\psi_{0} ( a; -; q, x ) 
=
\sum_{n \in \Zbb}
( a; q )_{n}
( - 1 )^{n} q^{- {n ( n - 1) \over 2}}
x^n.
\end{align}
Applying the $q$-Borel-Laplace transformation to ${}_{1}\psi_{0} ( a; -; q, x )$ can bring us to the following conclusion:
\begin{thm} \label{qLBB} 
For any $x \in \Cbb^\ast \setminus [ - \la; q ]$, we have
\begin{align*} %
\widetilde{\psi}_\la^{\mathrm{B}} ( x )
=
{( q; q )_{\infty} \over ( {q \over a}; q )_{\infty}}
{\theta_{q} ( a \la )
\theta_{q} \lp {a q x \over \la} \rp
\over
\theta_{q} ( \la )
\theta_{q} \lp {q x \over \la} \rp}
{}_{1}\varphi_{0} \lp 0; -; q, {1 \over a x} \rp,
\end{align*}
where $\widetilde{\psi}_\la^{\mathrm{B}} ( x ) := \lp \Lcal_{q, \la}^{+} \circ \Bcal_{q}^{+} {}_{1}\psi_{0} \rp ( a; -; q, x )$.
\end{thm}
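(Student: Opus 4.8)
The plan is to run the same three-step scheme (\textit{Borel} $\to$ closed form $\to$ \textit{Laplace}) used in the proof of Theorem \ref{qLBA}, with Lemma \ref{limRam} replaced by its ${}_{1}\psi_{0}$-counterpart. First I would compute the $q$-Borel transform \eqref{qBorel}: the Borel weight $q^{n ( n - 1 )/2}$ cancels exactly the factor $q^{- n ( n - 1 )/2}$ carried by the coefficients of ${}_{1}\psi_{0}$, so
\begin{align*}
\psi_{\mathrm{B}} ( \xi ) := \lp \Bcal_{q}^{+} {}_{1}\psi_{0} \rp ( a; -; q, \xi ) = \sum_{n \in \Zbb} ( a; q )_{n} ( - \xi )^{n} = {}_{1}\psi_{1} ( a; 0; q, - \xi ),
\end{align*}
where the last equality uses $( 0; q )_{n} = 1$ for all $n \in \Zbb$.

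Next I would establish the closed form that plays the role of Lemma \ref{limRam}. Putting $b = 0$ in Ramanujan's summation formula \eqref{rsum11} (equivalently taking $b \to 0$, which is harmless since the convergence domain opens to $0 < |z| < 1$) and rewriting the infinite products through Jacobi's triple product identity \eqref{jtpi} and the relation $E_{q} ( z ) = ( - z; q )_{\infty}$ from \eqref{eq2}, I expect to obtain
\begin{align*}
\psi_{\mathrm{B}} ( \xi ) = {( q; q )_{\infty} \over ( q / a; q )_{\infty}} {\theta_{q} ( a \xi ) \over \theta_{q} ( \xi )} E_{q} \lp {q \over \xi} \rp.
\end{align*}
This is the exact mirror of the expression for $\psi_{\mathrm{A}}$ in Theorem \ref{qLBA}, except that the $q$-exponential is now evaluated at $q / \xi$, so it expands in nonnegative powers of $\xi^{- 1}$ rather than of $\xi$.

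Then I would apply the $q$-Laplace transform \eqref{qLaplace}, i.e. evaluate $\psi_{\mathrm{B}}$ at $\xi = \la q^{n}$ and divide by $\theta_{q} ( \la q^{n} / x )$. Each of the three theta functions is reduced by the quasi-periodicity \eqref{thetaperi} to its value at $n = 0$ times $q^{\pm n ( n - 1 )/2}$ and a monomial in $n$, while $E_{q} ( q^{1 - n} / \la )$ is expanded as $\sum_{m \ge 0} q^{m ( m - 1 )/2} ( q / \la )^{m} q^{- n m} / ( q; q )_{m}$. Collecting the exponents, the key point is that the resulting $q$-quadratic form equals $q^{( n - m )( n - m - 1 )/2} q^{- m}$, so the natural summation variable is $N = n - m$ (in contrast to $N = n + m$ in Theorem \ref{qLBA}, the sign being flipped by the $q^{- n m}$ cross term). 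Since $n$ runs over all of $\Zbb$, for each fixed $m$ the variable $N$ again runs over all of $\Zbb$, so the double sum factorizes into $\theta_{q} ( \la / ( a x ) ) \cdot {}_{1}\varphi_{0} ( 0; -; q, 1 / ( a x ) )$, yielding
\begin{align*}
\widetilde{\psi}_\la^{\mathrm{B}} ( x ) = {( q; q )_{\infty} \over ( q / a; q )_{\infty}} {\theta_{q} ( a \la ) \over \theta_{q} ( \la ) \theta_{q} ( \la / x )} \theta_{q} \lp {\la \over a x} \rp {}_{1}\varphi_{0} \lp 0; -; q, {1 \over a x} \rp.
\end{align*}
Finally, the inversion formula \eqref{inv} converts $\theta_{q} ( \la / x )$ into $\theta_{q} ( q x / \la )$ and $\theta_{q} ( \la / ( a x ) )$ into $\theta_{q} ( a q x / \la )$, which is precisely the claimed expression.

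The monomial bookkeeping and the applications of \eqref{thetaperi} are routine; the step I expect to demand the most care is deriving the closed form for $\psi_{\mathrm{B}}$, since one must check that the $b \to 0$ degeneration of Ramanujan's formula is legitimate and that the triple-product rearrangement isolates the factor $E_{q} ( q / \xi )$ with the \emph{reciprocal} argument (this is what ultimately produces ${}_{1}\varphi_{0} ( 0; -; q, 1 / ( a x ) )$ and the $N = n - m$ regrouping). A secondary point is the interchange of the $n$- and $m$-summations, justified by absolute convergence of the $q$-exponential series for $x \in \Cbb^{\ast} \setminus [ - \la; q ]$.
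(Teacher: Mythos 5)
Your proposal is correct and follows essentially the same route as the paper's proof: $q$-Borel transform to ${}_{1}\psi_{1}(a;0;q,-\xi)$, closed form via Ramanujan's summation with $b=0$ rewritten as $\frac{(q;q)_\infty}{(q/a;q)_\infty}\frac{\theta_q(a\xi)}{\theta_q(\xi)}E_q(q/\xi)$, then the $q$-Laplace transform with quasi-periodicity, the regrouping $N=n-m$, and the final inversion \eqref{inv}. The exponent bookkeeping you sketch indeed collapses to $q^{(n-m)(n-m-1)/2}$ once the monomials are absorbed, exactly as in the paper.
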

\begin{proof} 
We perform at first the $q$-Borel transformation \eqref{qBorel},
\begin{align*} %
\psi_{\text{B}} ( \xi )
&:=
\lp \Bcal_{q}^{+} {}_{1}\psi_{0} \rp ( a; -; q, x ) \notag \\ 
&=
\sum_{n \in \Zbb}
( a; q )_{n}
( - \xi )^n \notag \\ 
&=
{}_{1}\psi_{1} ( a; 0; q, - \xi ). 
\end{align*}
Then, it is clear that Ramanujan's summation formula directly works on $\psi_{\text{B}} ( \xi )$ as
\begin{align*}
\psi_{\text{B}} ( \xi )
&=
{( q, - a \xi, - {q \over a \xi}; q )_{\infty}
\over
( {q \over a}, - \xi; q )_{\infty}} \notag \\ 
&=
{( q; q )_{\infty} \over ( {q \over a}; q )_{\infty}}
{\theta_{q} ( a \xi ) \over \theta_{q} ( \xi )}
E_{q} \lp {q \over \xi} \rp. 
\end{align*}
Finally, the $q$-Laplace transformation \eqref{qLaplace} of $\psi_{\text{B}} ( \xi )$ results in
\begin{align*} %
\widetilde{\psi}_\la^{\text{B}} ( x )
&:=
\lp \Lcal_{q, \la}^{+} \psi_{\text{B}} ( \xi ) \rp ( \xi ) \notag \\ 
&=
\sum_{n \in \Zbb}
{{}_{1}\psi_{1} ( a; 0; q, - \la q^{n} ) \over \theta_{q} \lp {\la q^{n} \over x} \rp} \notag \\ 
&=
{( q; q )_{\infty} \over ( {q \over a}; q )_{\infty}}
\sum_{n \in \Zbb}
{\theta_{q} ( a \la q^{n} ) \over \theta_{q} ( \la q^{n} )}
{1 \over \theta_{q} \lp {\la q^{n} \over x} \rp}
\sum_{m \geq 0}
{( - 1 )^{m} q^{m ( m - 1 ) \over 2} \over ( q; q )_{m}}
\lp - {q \over \la q^{n}} \rp^{m} \notag \\ 
&=
{( q; q )_{\infty} \over ( {q \over a}; q )_{\infty}}
\sum_{n \in \Zbb}
{( a \la )^{- n} q^{- {n ( n - 1 ) \over 2}} \theta_{q} ( a \la ) \over \la^{- n} q^{- {n ( n - 1 ) \over 2}} \theta_{q} ( \la )}
{1 \over \lp {\la \over x} \rp^{- n} q^{- {n ( n - 1 ) \over 2}} \theta_{q} \lp {\la \over x} \rp}
\sum_{m \geq 0}
{( - 1 )^{m} q^{m ( m - 1 ) \over 2} \over ( q; q )_{m}}
\lp - {q \over \la q^{n}} \rp^{m} \notag \\ 
&=
{( q; q )_{\infty} \over ( {q \over a}; q )_{\infty}}
{\theta_{q} ( a \la ) \over \theta_{q} ( \la ) \theta_{q} \lp {\la \over x} \rp}
\sum_{n \in \Zbb}
\sum_{m \geq 0}
\lp {\la \over a x} \rp^{n - m}
q^{( n - m ) ( n - m - 1 ) \over 2}
{1 \over ( q; q )_{m}}
\lp {1 \over a x} \rp^{m} \notag \\ 
&=
{( q; q )_{\infty} \over ( {q \over a}; q )_{\infty}}
{\theta_{q} ( a \la ) \over \theta_{q} ( \la ) \theta_{q} \lp {\la \over x} \rp}
\theta_{q} \lp {\la \over a x} \rp
{}_{1}\varphi_{0} \lp 0; -; q, {1 \over a x} \rp \notag \\ 
&=
{( q; q )_{\infty} \over ( {q \over a}; q )_{\infty}}
{
\theta_{q} ( a \la )
\over
\theta_{q} ( \la )
\theta_{q} \lp {q x \over \la} \rp
}
\theta_{q} \lp {a q x \over \la} \rp
{}_{1}\varphi_{0} \lp 0; -; q, {1 \over a x} \rp, 
\end{align*}
where in the last line the inversion formula \eqref{inv} of the theta function is used. Therefore, we find the desired convergent series with $x \in \Cbb^\ast \setminus [ - \la; q ]$.
\end{proof}

\begin{cor}By Theorem \ref{qLBB} and \eqref{dBus}, we obtain the following relation
\[\widetilde{\psi}_\la^{\mathrm{B}} ( x )=
{( q; q )_{\infty} \over ( q/a; q )_{\infty}}\frac{\theta_q(a\lambda )}{\theta_q\left(\lambda \right)}
\frac{\theta_q\left(\frac{aqx}{\lambda}\right)}{\theta_q\left(\frac{qx}{\lambda}\right)}\frac{\theta_q(x)}{\theta_q(ax)}\hat{v}(x)=:C_{\mathrm{B}}(x)\hat{v}(x).
\]
We remark that the function $C_{\mathrm{B}}(x)$ is an elliptic function, namely, $q$-periodic and unique valued.
\end{cor}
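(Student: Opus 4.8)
The plan is to obtain the stated relation by a direct substitution and then to establish the ellipticity of $C_{\mathrm{B}}(x)$ by hand from the quasi-periodicity of $\theta_q$. First I would solve \eqref{dBus} for the unilateral series,
\[
{}_{1}\varphi_{0} \lp 0; -; q, \tfrac{1}{a x} \rp = \frac{\theta_q(x)}{\theta_q(ax)}\,\hat{v}(x),
\]
and insert this into the closed form of $\widetilde{\psi}_\la^{\mathrm{B}}(x)$ furnished by Theorem \ref{qLBB}. Collecting the theta factors immediately reproduces $\widetilde{\psi}_\la^{\mathrm{B}}(x) = C_{\mathrm{B}}(x)\hat{v}(x)$ with
\[
C_{\mathrm{B}}(x) = \frac{(q;q)_\infty}{(q/a;q)_\infty}\,\frac{\theta_q(a\la)}{\theta_q(\la)}\,\frac{\theta_q\lp \frac{aqx}{\la}\rp}{\theta_q\lp \frac{qx}{\la}\rp}\,\frac{\theta_q(x)}{\theta_q(ax)}.
\]
This step is purely algebraic and carries no difficulty; the content lies in verifying that $C_{\mathrm{B}}$ is elliptic, i.e.\ $q$-periodic and single-valued.

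For single-valuedness I would note that $\theta_q$ is defined by the Laurent series \eqref{thetaJ}, hence is a single-valued holomorphic function on $\Cbb^\ast$; consequently every ratio of theta factors above is meromorphic and single-valued on $\Cbb^\ast$, and so is their product. For $q$-periodicity I would replace $x$ by $qx$ and apply the transformation \eqref{thetaperi} with $k=1$, i.e.\ $\theta_q(zq)=z^{-1}\theta_q(z)$, to each $x$-dependent factor. The constant $(q;q)_\infty/(q/a;q)_\infty$ and the $\la$-only factor $\theta_q(a\la)/\theta_q(\la)$ are inert under $x\mapsto qx$, so only the product
\[
F(x) := \frac{\theta_q\lp \frac{aqx}{\la}\rp}{\theta_q\lp \frac{qx}{\la}\rp}\,\frac{\theta_q(x)}{\theta_q(ax)}
\]
needs to be tested.

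Applying \eqref{thetaperi} produces a scalar prefactor from each theta: the two numerator factors contribute $\frac{\la}{aqx}$ and $x^{-1}$, while the two denominator factors contribute $\frac{\la}{qx}$ and $(ax)^{-1}$. Their combination is
\[
\frac{\lp \frac{\la}{aqx}\rp x^{-1}}{\lp \frac{\la}{qx}\rp (ax)^{-1}} = 1,
\]
so that $F(qx)=F(x)$ and hence $C_{\mathrm{B}}(qx)=C_{\mathrm{B}}(x)$. Being $q$-periodic and single-valued on $\Cbb^\ast$, the function $C_{\mathrm{B}}$ descends to a meromorphic function on the complex torus $\Cbb^\ast/q^{\Zbb}$ and is therefore elliptic. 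The \emph{only} place demanding care is the bookkeeping in this last step: one must pair the correct numerator theta against the correct denominator theta so that the four scalar prefactors cancel exactly, which is guaranteed here because $aqx/\la$ is paired against $ax$ and $qx/\la$ against $x$. I do not anticipate any genuine obstacle beyond this clerical verification.
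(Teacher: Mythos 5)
Your proposal is correct and follows the same (essentially forced) route as the paper: the corollary is obtained simply by solving \eqref{dBus} for ${}_{1}\varphi_{0}\lp 0;-;q,\tfrac{1}{ax}\rp$ and substituting into Theorem \ref{qLBB}, which the paper leaves implicit. Your explicit check that the four scalar prefactors from \eqref{thetaperi} cancel under $x\mapsto qx$ is a correct verification of the ellipticity remark that the paper merely asserts.
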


\section{The $q \to 1 - 0$ limit of the degenerations}\label{Limit}
We consider the limit $q\to 1-0$ of our formulae obtained in the previous section.
Before showing those limiting formula, we also focus on the limit $q\to 1-0$ of the $q$-binomial theorem in the following \textit{linear sum form} \cite{TMM}:
\begin{equation}\label{lin}
{}_1\varphi_0(a;-;q,x)
={}_2\varphi_1(a,aq;q;q^2,x^2)+x\frac{(a,q^3;q^2)_\infty}{(aq^2,q;q^2)_\infty} {}_2\varphi_1(aq,aq^2;q^3;q^2,x^2).
\end{equation}
\begin{prop} 
We put $a=q^\alpha$ in \eqref{lin}. Then, for any $x\in\mathbb{C}^*$, we have 
\[{}_1F_0(\alpha ;-;x)
={}_2F_1\left(\frac{\alpha}{2},\frac{\alpha}{2}+\frac{1}{2};\frac{1}{2};x^2\right)+
\alpha x{}_2F_1\left(\frac{\alpha}{2}+\frac{1}{2}, \frac{\alpha}{2}+1;\frac{3}{2};x^2\right).\]
\end{prop}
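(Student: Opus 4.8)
The plan is to substitute $a=q^{\alpha}$ into the linear sum form \eqref{lin} and to pass to the limit $q\to 1-0$ separately on each of its three constituents, relying on the elementary termwise degeneration of the $q$-shifted factorial. For a single factor and fixed $n$ one has
\[
\lim_{q \to 1 - 0} \frac{( q^{s}; q )_{n}}{( 1 - q )^{n}} = ( s )_{n},
\]
which is immediate from $(q^{s};q)_{n}=\prod_{j=0}^{n-1}(1-q^{s+j})$ and is the finite-product companion of \eqref{limbin}; the version with base $q^{2}$ reads $\lim_{q\to1-0}(q^{2s};q^{2})_{n}/(1-q^{2})^{n}=(s)_{n}$. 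Since the two ${}_2\varphi_1$ on the right of \eqref{lin} carry the base $q^{2}$, I would first set $p:=q^{2}$ (so $p\to1-0$ as $q\to1-0$) and rewrite every parameter as a power of $p$ before letting $p\to1-0$.

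With this device the three hypergeometric blocks fall out immediately. On the left, $\lim_{q\to1-0}{}_1\varphi_0(q^{\alpha};-;q,x)=\sum_{n\ge0}\frac{(\alpha)_{n}}{n!}x^{n}={}_1F_0(\alpha;-;x)$. For the first factor the parameters become $q^{\alpha}=p^{\alpha/2}$, $q^{\alpha+1}=p^{(\alpha+1)/2}$ and $q=p^{1/2}$, so the coefficientwise limit sends ${}_2\varphi_1(q^{\alpha},q^{\alpha+1};q;q^{2},x^{2})$ to ${}_2F_1\lp \frac{\alpha}{2},\frac{\alpha}{2}+\frac12;\frac12;x^{2}\rp$. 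Likewise the second factor, with $q^{\alpha+1}=p^{(\alpha+1)/2}$, $q^{\alpha+2}=p^{\alpha/2+1}$ and $q^{3}=p^{3/2}$, tends to ${}_2F_1\lp \frac{\alpha}{2}+\frac12,\frac{\alpha}{2}+1;\frac32;x^{2}\rp$, matching the stated Gauss series exactly.

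It then remains to identify the limit of the scalar prefactor $x\,(q^{\alpha},q^{3};q^{2})_{\infty}/(q^{\alpha+2},q;q^{2})_{\infty}$. Here I would bypass \eqref{limbin} and simply telescope: since $(q^{\alpha};q^{2})_{\infty}=(1-q^{\alpha})(q^{\alpha+2};q^{2})_{\infty}$ and $(q;q^{2})_{\infty}=(1-q)(q^{3};q^{2})_{\infty}$, the prefactor collapses to the closed form $x\,(1-q^{\alpha})/(1-q)$, and $\lim_{q\to1-0}(1-q^{\alpha})/(1-q)=\alpha$ produces exactly the coefficient $\alpha x$. Assembling the three limits reproduces the asserted identity.

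The one genuine point requiring care is the interchange of $\lim_{q\to1-0}$ with the infinite summations. On a compact subset of the disk $|x|<1$ the coefficients $(q^{2s};q^{2})_{n}/(1-q^{2})^{n}$ stay uniformly bounded as $q\to1-0$, so dominated convergence legitimizes the termwise passage and yields the identity on $|x|<1$. Because ${}_1F_0(\alpha;-;x)=(1-x)^{-\alpha}$ and the two Gauss series all continue analytically off the unit disk, the identity then extends to all $x\in\mathbb{C}^{*}$ (away from the cut of $(1-x)^{-\alpha}$) by analytic continuation. As an independent check, the same statement is nothing but the even/odd splitting of the binomial series $(1-x)^{-\alpha}$, obtained from the duplication relations $(\alpha)_{2k}=4^{k}\lp \frac{\alpha}{2}\rp_{k}\lp \frac{\alpha+1}{2}\rp_{k}$, $(2k)!=4^{k}k!\,\lp \frac12\rp_{k}$ and $(2k+1)!=4^{k}k!\,\lp \frac32\rp_{k}$.
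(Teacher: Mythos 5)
Your proposal is correct and follows essentially the same route the paper indicates: taking the termwise $q\to 1-0$ limit of \eqref{lin} with $a=q^{\alpha}$ (no rescaling of $x$ being needed since the powers of $1-q$ cancel between numerator and denominator in each coefficient, and the prefactor telescoping exactly to $(1-q^{\alpha})/(1-q)\to\alpha$), together with the direct verification by splitting the binomial series $(1-x)^{-\alpha}$ into even and odd parts via the duplication formulae. The paper states both of these steps only as a one-line remark, so your write-up simply supplies the details it omits; the only soft spot is the phrase ``uniformly bounded'' for the coefficients (they grow like $(s)_{n}$, and what one really needs is a $q$-uniform summable majorant, e.g.\ via $\frac{1-q^{t}}{1-q}\le\max(t,1)$), but the independent even/odd check makes the identity airtight regardless.
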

We can straightforwardly derive this sum expression and also verify this directly by decomposing the series expansion of ${}_1F_0(\alpha ;-;x)$ into the summations over even and odd integers. In addition, we consider the limit $q\to 1-0$ of the special case $a=0$ of \eqref{lin}:
\begin{equation}\label{lin2}
{}_1\varphi_0(0;-;q,x)
={}_2\varphi_1(0,0;q;q^2,x^2)+x\frac{(q^3;q^2)_\infty}{(q;q^2)_\infty}{}_2\varphi_1(0,0;q^3;q^2,x^2).
\end{equation}
\begin{prop} 
For any $x\in\mathbb{C}^*$, we have
\[_0F_0(-;-;2x)
={}_0F_1\left(-;\frac{1}{2};x^2\right)+ 2 x {}_0F_1\left(-;\frac{3}{2};x^2\right). \]
\end{prop}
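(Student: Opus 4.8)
The plan is to obtain the identity as the classical limit $q\to 1-0$ of the special case \eqref{lin2}, in exact parallel with the preceding proposition and with the other limiting formulae of Section~\ref{Notation}. The essential point is that a bare limit of \eqref{lin2} is degenerate (each individual $q$-Pochhammer factor collapses), so I would first introduce a $q$-dependent rescaling of the variable that makes the three $q$-series confluence to the three classical series in the statement. Concretely, I would replace $x$ by $(1-q^2)x$ throughout \eqref{lin2} and then let $q\to 1-0$. The point of using $(1-q^2)$ rather than $(1-q)$ is that $(1-q^2)\to 2(1-q)$, and it is precisely this factor of $2$ that produces the $2$ inside ${}_0F_0(-;-;2x)$ and the $2x$ in front of ${}_0F_1(-;3/2;x^2)$.

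I would then handle the three pieces separately. For the left-hand side, ${}_1\varphi_0(0;-;q,(1-q^2)x)=\sum_{n\ge 0}\frac{((1-q^2)x)^n}{(q;q)_n}$, and since $\frac{(1-q^2)^n}{(q;q)_n}\to\frac{2^n}{n!}$ as $q\to 1-0$, this tends to $\sum_{n\ge 0}\frac{(2x)^n}{n!}={}_0F_0(-;-;2x)$ (equivalently one may apply the stated limit $\lim_{q\to 1-0}E_q(z(1-q))=e^z$ to $1/(\,\cdot\,;q)_\infty$). For the first term on the right, ${}_2\varphi_1(0,0;q;q^2,(1-q^2)^2x^2)=\sum_{n\ge 0}\frac{((1-q^2)^2x^2)^n}{(q;q^2)_n(q^2;q^2)_n}$; using the asymptotics $(q;q^2)_n(q^2;q^2)_n\sim(1-q)^{2n}(2n)!$ together with $(1-q^2)^{2n}\sim 4^n(1-q)^{2n}$, the limit is $\sum_{n\ge 0}\frac{4^nx^{2n}}{(2n)!}={}_0F_1(-;\tfrac12;x^2)$, where the last equality uses $(\tfrac12)_n\,n!=(2n)!/4^n$.

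For the second term the prefactor simplifies exactly, before any limit is taken: since $(q;q^2)_\infty=(1-q)(q^3;q^2)_\infty$, one has $\frac{(q^3;q^2)_\infty}{(q;q^2)_\infty}=\frac{1}{1-q}$, so the rescaled coefficient becomes $(1-q^2)x\cdot\frac{1}{1-q}=(1+q)x\to 2x$. The remaining factor ${}_2\varphi_1(0,0;q^3;q^2,(1-q^2)^2x^2)$ is treated as above, now with $(q^3;q^2)_n(q^2;q^2)_n\sim(1-q)^{2n}(2n+1)!$, and tends to $\sum_{n\ge 0}\frac{4^nx^{2n}}{(2n+1)!}={}_0F_1(-;\tfrac32;x^2)$, using $(\tfrac32)_n\,n!=(2n+1)!/4^n$. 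Assembling the three limits yields the claimed identity.

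The main obstacle is not any single computation but the justification of passing to the limit term by term in each infinite $q$-series. I would secure this by exhibiting $q$-uniform majorants for the rescaled coefficients on compact sets in $x$: the asymptotics above show the coefficients are comparable to $2^n/n!$, $4^n/(2n)!$ and $4^n/(2n+1)!$, which are summable uniformly in $q$ near $1$, so dominated convergence applies. As a consistency check requiring no limits at all, the asserted right-hand side equals $\cosh(2x)+\sinh(2x)$, since ${}_0F_1(-;\tfrac12;x^2)=\cosh(2x)$ and $2x\,{}_0F_1(-;\tfrac32;x^2)=\sinh(2x)$, while ${}_0F_0(-;-;2x)=e^{2x}$; the identity is then simply $e^{2x}=\cosh(2x)+\sinh(2x)$.
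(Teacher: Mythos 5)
Your proposal is correct and follows essentially the same route as the paper: substitute $x\mapsto(1-q^2)x$ in \eqref{lin2} and pass to the limit $q\to 1-0$ term by term, with the factor $2$ arising from $(1-q^2)\sim 2(1-q)$. The only (harmless) differences are that you evaluate the prefactor via the exact identity $(q;q^2)_\infty=(1-q)(q^3;q^2)_\infty$ where the paper uses the ratio $\Gamma_{q^2}(\tfrac12)/\Gamma_{q^2}(\tfrac32)\to 2$, and you phrase the coefficient limits through factorial asymptotics rather than Pochhammer symbols.
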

\begin{proof} 
We put $x\mapsto (1-q^2)x$ in \eqref{lin2} ad then implement the limit $q \to 1 - 0$. The left-hand side of \eqref{lin2} converges to the exponential function,
\[\lim_{q\to 1-0}\sum_{n\ge 0}\frac{(1-q^2)^n}{(q;q)_n}x^n
=\lim_{q\to 1-0}\sum_{n\ge 0}\frac{(1-q)^n}{(q;q)_n}((1+q)x)^n
={}_0F_0(-;-;2x)=e^{2x}.\]
On the other hand, the limit $q \to 1 - 0$ of the right-hand side with $x\mapsto (1-q^2)x$ reduces to
\begin{align} %
&\lim_{q\to 1-0}\left\{\sum_{n\ge 0}\frac{(1-q^2)^{2n}}{(q;q^2)_n(q^2;q^2)_n}x^{2n}+
(1-q^2)x\frac{(q^3;q^2)_\infty}{(q;q^2)_\infty}\sum_{n\ge 0}\frac{(1-q^2)^{2n}}{(q^3;q^2)_n(q^2;q^2)_n}x^{2n}
\right\}\notag\\ 
&=\lim_{q\to 1-0}\sum_{n\ge 0}\frac{(1-q^2)^n(1-q^2)^n}{(q;q^2)_n(q^2;q^2)_n}x^{2n}\notag\\ 
&\hspace{1em} +
x\lim_{q\to 1-0}\frac{(q^2;q^2)_\infty}{((q^2)^{\frac{1}{2}};q^2)_\infty}(1-q^2)^{\frac{1}{2}}\times
\frac{((q^2)^{\frac{3}{2}};q^2)_\infty}{(q^2;q^2)_\infty}(1-q^2)^{\frac{1}{2}}
\sum_{n\ge 0}\frac{(1-q^2)^n(1-q^2)^n}{(q^3;q^2)_n(q^2;q^2)_n}x^{2n}\notag\\ 
&=\lim_{q\to 1-0}\sum_{n\ge 0}\frac{(1-q^2)^n(1-q^2)^n}{(q;q^2)_n(q^2;q^2)_n}x^{2n}
+x\lim_{q\to 1-0}\frac{\Gamma_{q^2}\left(\frac{1}{2}\right)}{\Gamma_{q^2}\left(\frac{3}{2}\right)}
\sum_{n\ge 0}\frac{(1-q^2)^n(1-q^2)^n}{(q^3;q^2)_n(q^2;q^2)_n}x^{2n}\notag\\ 
&=\sum_{n\ge 0}\frac{1}{\left(\frac{1}{2}\right)_n n!}x^{2n}
+x\frac{\Gamma\left(\frac{1}{2}\right)}{\Gamma\left(\frac{3}{2}\right)}\sum_{n\ge 0}\frac{1}{\left(\frac{3}{2}\right)_n n!}x^{2n}\notag\\ 
&={}_0F_1\left(-;\frac{1}{2};x^2\right)+2x{}_0F_1\left(-;\frac{3}{2};x^2\right). \notag 
\end{align}
Therefore, we obtain the conclusion. 
\end{proof}

\subsection{On the degeneration A}
Let us consider the limit $q\to 1-0$ of Theorem \ref{qLBA}.
We can find the well-defined limiting formula by scaling $x \mapsto ( 1 - q ) x$.
\begin{thm} 
For any $x \in \Cbb^\ast$,
\begin{align} %
\lim_{q \to 1 - 0}
\widetilde{\psi}_\la^{\mathrm{A}} (( 1 - q ) x )
=
\Gamma ( \beta ) x^{1 - \beta} e^x.
\end{align}
\end{thm}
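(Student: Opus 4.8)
The plan is to begin from the closed form of $\widetilde{\psi}_\la^{\mathrm A}(x)$ furnished by Theorem \ref{qLBA}, set $b=q^\beta$, substitute $x\mapsto(1-q)x$, and then pass to the limit factor by factor. Writing $b=q^\beta$ turns the constant prefactor into a $q$-gamma function by its very definition,
\[
\frac{(q;q)_\infty}{(q^\beta;q)_\infty}=\Gamma_q(\beta)(1-q)^{\beta-1},
\]
and by \eqref{limgamma} the $q$-gamma part tends to $\Gamma(\beta)$. I would deliberately keep the leftover factor $(1-q)^{\beta-1}$ explicit, since it is the source of one of the two compensating powers of $(1-q)$ that make the whole limit finite.

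Next I would treat the three remaining pieces separately. The basic hypergeometric factor ${}_1\varphi_0(0;-;q,(1-q)x)$ equals $1/\big((1-q)x;q\big)_\infty$ by the $q$-binomial theorem \eqref{qbinomial}; expanding it as $\sum_{n\ge0}\frac{(1-q)^n}{(q;q)_n}x^n$ and using $\frac{(1-q)^n}{(q;q)_n}\to\frac1{n!}$ — exactly the computation already carried out for the $_0F_0$ proposition in Section \ref{Limit} — gives the limit $e^x$. The purely $\la$-dependent theta ratio becomes, after setting $b=q^\beta$,
\[
\frac{\theta_q(\la)}{\theta_q\lp q^{1-\beta}\la\rp},
\]
whose arguments carry no $(1-q)$ scaling, so formula \eqref{p1} of Proposition \ref{limthetaq} applies directly and yields $\la^{1-\beta}$ (taking $\arg\la$ in the principal domain as required).

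The one genuinely delicate piece is the $x$-dependent theta ratio. After the substitutions it reads
\[
\frac{\theta_q\!\lp \frac{q^{1-\beta}(\la/x)}{1-q}\rp}{\theta_q\!\lp \frac{\la/x}{1-q}\rp},
\]
and because the arguments now scale like $1/(1-q)$, this ratio diverges on its own and must be handled by formula \eqref{limt2}, not \eqref{p1}. Applying \eqref{limt2} with $z=\la/x$ shows that the ratio behaves like $(\la/x)^{\beta-1}(1-q)^{1-\beta}$. The key point — and the main thing to verify with care — is that this $(1-q)^{1-\beta}$ exactly cancels the $(1-q)^{\beta-1}$ left over from the $q$-gamma rewriting, while $\la^{1-\beta}\cdot(\la/x)^{\beta-1}=x^{1-\beta}$ wipes out the spurious $\la$-dependence. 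Assembling $\Gamma(\beta)$, $x^{1-\beta}$, and $e^x$ then reproduces the claimed limit.

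I expect the only real obstacle to be the bookkeeping of the powers of $(1-q)$ and $\la$: one must be disciplined about choosing \eqref{limt2} for the argument that inherits the $1/(1-q)$ scaling and \eqref{p1} for the unscaled one, and then check that the two fractional powers of $(1-q)$ annihilate each other. Everything else is the direct substitution together with the already-established scalar limits.
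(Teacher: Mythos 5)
Your proposal is correct and follows essentially the same route as the paper's proof: substitute $b=q^\beta$ and $x\mapsto(1-q)x$, extract $\Gamma_q(\beta)$ with its compensating factor $(1-q)^{\beta-1}$, and send the two theta ratios to $\la^{1-\beta}$ and $(\la/x)^{\beta-1}$ via \eqref{p1} and \eqref{limt2} while the ${}_1\varphi_0$ factor tends to $e^x$. Your explicit separation of which ratio needs \eqref{p1} versus \eqref{limt2} is exactly the bookkeeping the paper performs implicitly.
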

\begin{proof} 
We put $x\mapsto (1-q)x$ and $b\mapsto q^\beta$ in Theorem \ref{qLBA} to obtain the limit. Then, we have
\begin{align} %
\widetilde{\psi}_\la^{\mathrm{A}} (( 1 - q ) x )
&=\frac{(q;q)_\infty}{(q^\beta ;q)_\infty}\frac{\theta_q(\lambda )}{\theta_q(q^{1-\beta }\lambda )}
\frac{\theta_q\left(\frac{q^{1-\beta }}{(1-q)}\frac{\lambda}{x}\right)}{\theta_q\left(\frac{1}{(1-q)}\frac{\lambda}{x}\right)}\sum_{n\ge 0}\frac{(1-q)^n}{(q;q)_n}x^n\notag\\
&=\frac{(q;q)_\infty}{(q^\beta ;q)_\infty}(1-q)^{1-\beta}\frac{\theta_q(\lambda )}{\theta_q(q^{1-\beta }\lambda )}
\frac{\theta_q\left(\frac{q^{1-\beta }}{(1-q)}\frac{\lambda}{x}\right)}{\theta_q\left(\frac{1}{(1-q)}\frac{\lambda}{x}\right)}(1-q)^{\beta -1}\sum_{n\ge 0}\frac{(1-q)^n}{(q;q)_n}x^n\notag\\
&=\Gamma_q(\beta )\frac{\theta_q(\lambda )}{\theta_q(q^{1-\beta }\lambda )}\frac{\theta_q\left(\frac{q^{1-\beta }}{(1-q)}\frac{\lambda}{x}\right)}{\theta_q\left(\frac{1}{(1-q)}\frac{\lambda}{x}\right)}(1-q)^{\beta -1}\sum_{n\ge 0}\frac{(1-q)^n}{(q;q)_n}x^n \label{lim1}
\end{align}
Now, applying \eqref{limt2} in Proposition \ref{limthetaq} to equation \eqref{lim1}, we have
\begin{align*} %
\lim_{q \to 0}
\widetilde{\psi}_\la^{\mathrm{A}} (( 1 - q ) x )
=
\Gamma (\beta )\times (\lambda )^{1-\beta }\times \left(\frac{\lambda}{x}\right)^{\beta -1}\sum_{n\ge 0}\frac{1}{n!}x^n
=
\Gamma (\beta )(x)^{1-\beta}e^x,
\end{align*}
which is the consistent limiting formula of Theorem \ref{qLBA} as $q \to 1 - 0$.
\end{proof}

\subsection{On the degeneration B}
Finally, we treat with Theorem \ref{qLBB} in the limit $q\to 1-0$.
With rescaling $x \mapsto x/( 1 - q )$, the limit $q \to 1 - 0$ of Theorem \ref{qLBB} turns out to be the following formula:
\begin{thm} 
For any $x \in \Cbb^\ast$,
\begin{align} %
\lim_{q \to 1- 0}
\widetilde{\psi}_\la^{\mathrm{B}} \left( \frac{x}{1 - q} \right)
=
\Gamma ( 1 - \alpha ) x^{- \alpha} e^{\frac{1}{x}}.
\end{align}
\end{thm}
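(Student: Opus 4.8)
The plan is to substitute $a=q^{\alpha}$ and rescale $x\mapsto x/(1-q)$ directly in the closed form of Theorem~\ref{qLBB}, and then extract the $q\to1-0$ limit factor by factor, mirroring the Degeneration~A computation but with the reciprocal scaling forced by the series ${}_1\varphi_0(0;-;q,1/(ax))$ that appears in $\widetilde{\psi}_\la^{\mathrm{B}}$. After the substitution the expression splits into four ingredients: the prefactor $(q;q)_\infty/(q^{1-\alpha};q)_\infty$; the theta-ratio $\theta_q(q^{\alpha}\la)/\theta_q(\la)$; the theta-ratio $\theta_q(q^{1+\alpha}x/((1-q)\la))/\theta_q(qx/((1-q)\la))$; and the series ${}_1\varphi_0(0;-;q,(1-q)/(q^{\alpha}x))$.

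First I would rewrite the prefactor through the $q$-gamma function: by its definition $(q;q)_\infty/(q^{1-\alpha};q)_\infty=\Gamma_q(1-\alpha)(1-q)^{-\alpha}$, which by \eqref{limgamma} tends to $\Gamma(1-\alpha)(1-q)^{-\alpha}$. The surviving weight $(1-q)^{-\alpha}$ is exactly what the second limiting formula \eqref{limt2} of Proposition~\ref{limthetaq} requires for the third ingredient. Indeed, reading \eqref{limt2} with $z=x/\la$, numerator exponent $1+\alpha$ and denominator exponent $1$, the prescribed weight is $(1-q)^{1-(1+\alpha)}=(1-q)^{-\alpha}$ and the limit is $(x/\la)^{-\alpha}$. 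Hence the prefactor and the third ingredient combine in the limit to $\Gamma(1-\alpha)(x/\la)^{-\alpha}$. For the second ingredient I would apply the first limiting formula \eqref{p1} with $z=\la$, which gives $\theta_q(q^{\alpha}\la)/\theta_q(\la)\to\la^{-\alpha}$.

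It then remains to treat the series. Since $(0;q)_n=1$, one has ${}_1\varphi_0(0;-;q,(1-q)/(q^{\alpha}x))=\sum_{n\ge0}\frac{(1-q)^n}{(q;q)_n}(q^{\alpha}x)^{-n}$, and the standard limit $(1-q)^n/(q;q)_n\to1/n!$ together with $q^{\alpha}\to1$ gives the termwise limit $\sum_{n\ge0}\frac{1}{n!}x^{-n}=e^{1/x}$. Multiplying the four limits, the powers of $(1-q)$ cancel, the $\la$-dependence disappears since $\la^{-\alpha}(x/\la)^{-\alpha}=x^{-\alpha}$, and we obtain $\Gamma(1-\alpha)\,x^{-\alpha}e^{1/x}$, which is the asserted formula.

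I expect the main obstacle to be the precise accounting of the powers of $(1-q)$: one must verify that the factor $(1-q)^{-\alpha}$ generated by $\Gamma_q(1-\alpha)$ matches, to the exponent, the weight demanded by \eqref{limt2} for the chosen numerator/denominator exponents, so that no residual power of $(1-q)$ remains and the limit is finite and nonzero. A secondary technical point is justifying the interchange of limit and summation in the ${}_1\varphi_0$ series, which can be handled exactly as in the Degeneration~A argument.
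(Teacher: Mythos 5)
Your proposal is correct and follows essentially the same route as the paper: substitute $a=q^{\alpha}$ and $x\mapsto x/(1-q)$ in Theorem~\ref{qLBB}, absorb the prefactor into $\Gamma_q(1-\alpha)$ with the compensating weight $(1-q)^{-\alpha}$, and apply Proposition~\ref{limthetaq} to the two theta-ratios together with the termwise limit of the ${}_1\varphi_0$ series. Your bookkeeping of the $(1-q)$-powers and the cancellation of $\la$ matches the paper's computation exactly.
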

\begin{proof} 
We put $x\mapsto x/(1-q)$ and $a \mapsto q^\alpha$ in Theorem \ref{qLBB} to consider the limit $q\to 1-0$. Then, we have 
\begin{align} %
\widetilde{\psi}_\la^{\text{B}} \left(\frac{x}{1-q}\right)
&=
\frac{(q;q)_\infty}{(q^{1-\alpha};q)_\infty}\frac{\theta_q(q^\alpha \lambda )}{\theta_q(\lambda )}\frac{\theta_q\left(\frac{q^{\alpha +1}}{(1-q)}\frac{x}{\lambda}\right)}{\theta_q\left(\frac{q}{(1-q)}\frac{x}{\lambda}\right)}
\sum_{n\ge 0}\frac{(1-q)^n}{(q;q)_n}\left(\frac{1}{q^\alpha x}\right)^n\notag\\
&=\frac{(q;q)_\infty}{(q^{1-\alpha};q)_\infty}(1-q)^{\alpha}\frac{\theta_q(q^\alpha \lambda )}{\theta_q(\lambda )}\frac{\theta_q\left(\frac{q^{\alpha +1}}{(1-q)}\frac{x}{\lambda}\right)}{\theta_q\left(\frac{q}{(1-q)}\frac{x}{\lambda}\right)}(1-q)^{-\alpha}
\sum_{n\ge 0}\frac{(1-q)^n}{(q;q)_n}\left(\frac{1}{q^\alpha x}\right)^n\notag\\  
&=\Gamma_q(1-\alpha )\frac{\theta_q(q^\alpha \lambda )}{\theta_q(\lambda )}\frac{\theta_q\left(\frac{q^{\alpha +1}}{(1-q)}\frac{x}{\lambda}\right)}{\theta_q\left(\frac{q}{(1-q)}\frac{x}{\lambda}\right)}(1-q)^{-\alpha}
\sum_{n\ge 0}\frac{(1-q)^n}{(q;q)_n}\left(\frac{1}{q^\alpha x}\right)^n. \label{lim2}
\end{align}
By the limiting formula \eqref{limt2} in Proposition \ref{limthetaq}, we can also obtain the well-defined limit $q\to 1-0$ of \eqref{lim2},
\begin{align*} %
\lim_{q \to 1 - 0}
\widetilde{\psi}_\la^{\text{B}} \left(\frac{x}{1-q}\right)
=
\Gamma (1-\alpha )\times (\lambda )^{-\alpha} \times \left(\frac{x}{\lambda}\right)^{-\alpha}\sum_{n\ge 0}\frac{1}{n!}\left(\frac{1}{x}\right)^n
=
\Gamma (1-\alpha )x^{-\alpha }e^{\frac{1}{x}}. 
\end{align*}
Therefore, we obtain the limit $q\to 1-0$ of our new formulae $\widetilde{\psi}_\la^{\mathrm{B}} ( x )$.
\end{proof}

\providecommand{\href}[2]{#2}\begingroup\raggedright
\endgroup

\end{document}